\documentclass[11pt]{amsart}

\usepackage[utf8]{inputenc} 

\usepackage[width=16cm,height=24cm]{geometry} 

\usepackage[backend=biber,
style=numeric,
url=false, doi=true, isbn=false,
, maxnames=10 
, sorting=nyt 
,giveninits=true 
]{biblatex}
\usepackage[autostyle=true]{csquotes} 

\usepackage{amsfonts,amsmath,amssymb,amsthm}
\usepackage{graphics,graphicx,epsfig}

\usepackage{comment}
\usepackage{color}
\usepackage{bbm}
\usepackage{dsfont} 
\usepackage{enumitem}
\usepackage{appendix}
\usepackage{tikz, tikz-cd}



\newcommand{\lpv}{{L^{p(\cdot)}(\Omega)}}

\newtheorem{thm}{Theorem}[section]

\newtheorem{prop}[thm]{Proposition}
\newtheorem{exa}[thm]{Examples}








\addbibresource{references.bib} 

\begin{document}
\setlength{\baselineskip}{6mm}
\title[A note on the strict sigularity of the inclusion between Nakano sequence spaces]{A note on the strict sigularity of the inclusion between Nakano sequence spaces}

\author[M. Sanchiz]{Mauro Sanchiz}

\address{
Departamento de An{\'a}lisis Matem{\'a}tico y Matem\'atica Aplicada, Facultad de Matem{\'a}ticas, Universidad Complutense, 28040 Madrid, Spain}
  \email{
  (M. Sanchiz) msanchiz@ucm.es}

\thanks{Supported by NAWA under the Ulam postoctoral program 2024/1/00064 and and partially supported by the research project 2025/00145/001 ``Operadores, retículos y estructura de espacios de Banach'', grant PID2019-107701G-I00 and scholarship CT42/18-CT43/18}

\subjclass[2020]{
46B45  	
,47B60 
}

\begin{abstract}
We characterize the strictly singular inclusions $\ell_{p_n}\hookrightarrow\ell_{q_n}$ between Nakano sequence spaces providing a useful criterion, namely $\varliminf_{n\rightarrow\infty}\vert p_n-q_n\vert>0$ (also recently obtained by Lang and Nekvinda in \cite{lang_embeddings_2025} with a different proof). It is also noted that no inclusion operator between Nakano sequence spaces is compact, neither $L$-weakly compact nor $M$-weakly compact. An easy criterion is given for the weak compactness of the inclusion.

\end{abstract}

\maketitle

\section{Introduction and Preliminaries} 

An operator $T : X \rightarrow Y$ between two Banach spaces is said to be \textit{strictly singular} (or Kato) if there is no infinite-dimensional (closed) subspace $Z$ of $X$ such that the restricted operator $T\vert_{Z}$ is an isomorphism (cf. \cite{kato_perturbation_1958}). 
The study of strictly singular inclusions has been quite extensive for symmetric function spaces (cf. \cite{astashkin_strictly_2009,hernandez_strictly_2003, hernandez_strict_2003}, Section 5 in \cite{kalton_orlicz_1977}). 

On this note, we study the strict singularity of the inclusion operator 
$$i:\ell_{p_n}\hookrightarrow\ell_{q_n}$$
between the Banach spaces of the non-symmetric class of Nakano sequence spaces. We also show some examples and briefly note the compactness, $L$-weakly compactness, $M$-weakly compactness and weakly compactness properties of the inclusion.

The theorems on strict singularity in this article can be deduced from the more general results (including the quasi-Banach case) obtained by Lang and Nekvinda in \cite{lang_embeddings_2025}. Without novelty in strict singularity results, this article has interest in itself due to its proofs, which differ from those of \cite{lang_embeddings_2025}. Lang and Nekvinda prove their statements by directly computing series, while our approach is different and is based on basic sequences\footnote{The results in this article were independently proven by the author on his thesis \cite{sanchiz_tesis_2023}, defended on July 2023, but submitted on February 2023, before the first version of \cite{lang_embeddings_2025} submitted to ArXiv on April 2023 \cite{lang_embeddings_2023}. The reason this article was not written earlier is that the results were a particular case of the general study of disjointly strictly singular inclusions between Nakano spaces $\lpv$, recently finished and published in \cite{hernandez_disjointly_2025}. In the final version of that article, the sequence Nakano spaces were removed since they did not fit well.}.

Given a real sequence $(p_n)$ with $1\leq p_{n}< \infty$ (named {\it exponent sequence}), the {\em  Nakano sequence space} $\ell_{p_n}$ is the space of all real sequences $(x_n)\in \mathbb R^{\mathbb N}$ such that the modular $\rho_{(p_n)}\left(\left(\frac{x_n}{r}\right)\right)<\infty$ for some $r>0$, where
$$
\rho_{(p_n)}\left(\left(x_n\right)\right):=\sum_{n=1}^\infty\left|x_n\right|^{p_n}.
$$
The space $\ell_{p_n}$ is a Banach space equipped with the Luxemburg norm:
$$
\lVert (x_n)\rVert_{\ell_{p_n}}=\inf \left\{ r>0: \rho_{(p_n)}\left(\left(\frac{x_n}{r}\right)\right)=\sum_{n=1}^{\infty} \left\vert \frac{x_n}{r}\right\vert^{p_n}\leq 1 \right\}.
$$

The Nakano sequence spaces are a particular class of Banach function spaces 
and Musielak-Orlicz sequence spaces (or modular sequence spaces) (cf. \cite{bennett_interpolation_1988, diening_lebesgue_2011,lindenstrauss_classical_1996}).

To study isomorphisms between Nakano sequence spaces, we can just study set inclusions due to the following well known fact in Banach function spaces: if a Banach function space $E(\mu)$ is contained (as sets) in another Banach function space $F(\mu)$ over the same measure space $(\Omega,\mu)$, then the inclusion operator $E(\mu)\hookrightarrow F(\mu)$ is well defined and bounded. Hence, if $E(\mu)=F(\mu)$ as sets, they are isomorphic (cf. \cite{bennett_interpolation_1988} Thm.1.8, Cor.1.9.).

Two Nakano sequence spaces $\ell_{p_n}$ and $\ell_{q_n}$ are isomorphic if the exponent sequences $(p_n)$ and $(q_n)$ are close enough (in particular, if $p_n=q_n$ up to finite many terms):

\begin{prop}[Nakano's Lemma, \cite{nakano_modulared_1951}]\label{Nakano-igualdad}
Let $1\leq p_{n},\,q_{n} <\infty$.  Then \,$\ell_{p_{n}} =  \ell_{q_{n}}$ if and only if there exists $\alpha > 0$ such that  
$$
\sum_{n=1}^{\infty} \alpha^{\frac{p_{n}q_{n}}{|p_{n}-q_{n}|}}  < \infty.
$$
\end{prop}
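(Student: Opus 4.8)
The plan is to read the set equality $\ell_{p_n}=\ell_{q_n}$ as the conjunction of the two inclusions $\ell_{p_n}\subseteq\ell_{q_n}$ and $\ell_{q_n}\subseteq\ell_{p_n}$, recalling from the Banach function space fact quoted above that each inclusion, once it holds as a set inclusion, is automatically bounded. Writing $\gamma_n=\tfrac{p_nq_n}{|p_n-q_n|}$ (with the convention $\gamma_n=+\infty$, hence $\alpha^{\gamma_n}=0$, when $p_n=q_n$ and $0<\alpha<1$), the quantity $\gamma_n$ is symmetric in $p_n,q_n$. It therefore suffices to characterise one inclusion, say $\ell_{p_n}\subseteq\ell_{q_n}$, by summability of $\alpha^{\gamma_n}$ over the indices $\{p_n>q_n\}$; the reverse inclusion follows by exchanging the roles of $p$ and $q$, and the stated condition is exactly the conjunction of the two one-sided sums (the indices $p_n=q_n$ contributing nothing, and one may assume $0<\alpha\le1$ since decreasing $\alpha$ only decreases each $\alpha^{\gamma_n}$).

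For sufficiency I would first establish the pointwise modular estimate
$$(\alpha t)^{q_n}\le t^{p_n}+\alpha^{\gamma_n}\qquad (t\ge0,\ 0<\alpha\le1),$$
valid on every index with $p_n>q_n$. This is Young's inequality $ab\le a^{s}/s+b^{s'}/s'$ applied with the conjugate exponents $s=p_n/q_n$ and $s'=p_n/(p_n-q_n)$ and the choices $a=t^{q_n}$, $b=\alpha^{q_n}$, so that $a^{s}=t^{p_n}$, $b^{s'}=\alpha^{\gamma_n}$, and $1/s,1/s'\le1$. On the indices with $p_n\le q_n$ the same inequality holds as soon as $t\le1$, since then $(\alpha t)^{q_n}\le t^{q_n}\le t^{p_n}$. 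Given $(x_n)\in\ell_{p_n}$, I would choose $r$ with $\rho_{(p_n)}((x_n/r))\le1$; this forces $|x_n/r|\le1$ for every $n$, so summing the estimate at $t=|x_n/r|$ gives
$$\rho_{(q_n)}((\alpha x_n/r))\le\rho_{(p_n)}((x_n/r))+\sum_n\alpha^{\gamma_n}\le 1+M<\infty,$$
whence $(x_n)\in\ell_{q_n}$. Thus $\sum_n\alpha^{\gamma_n}=M<\infty$ yields $\ell_{p_n}\subseteq\ell_{q_n}$, and by symmetry of $\gamma_n$ and $M$ also $\ell_{q_n}\subseteq\ell_{p_n}$, so $\ell_{p_n}=\ell_{q_n}$.

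For necessity I would argue by contraposition: assume no $\alpha$ works, i.e. $\sum_n\alpha^{\gamma_n}=\infty$ for every $\alpha\in(0,1)$. Splitting into $I=\{n:p_n>q_n\}$ and $J=\{n:p_n<q_n\}$ and using that $\alpha\mapsto\sum_{S}\alpha^{\gamma_n}$ is nondecreasing (so that, being finite at one point of $(0,1)$, it would be finite on an initial segment), one of the partial sums over $I$ or over $J$ must diverge for all $\alpha\in(0,1)$; say it is $I$ (the case $J$ being symmetric, with $p$ and $q$ exchanged). I would then build a witness $(x_n)$ supported on $I$ with $(x_n)\in\ell_{p_n}\setminus\ell_{q_n}$, contradicting $\ell_{p_n}=\ell_{q_n}$. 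Taking disjoint finite blocks $B_1,B_2,\dots\subseteq I$ and scales $\alpha_k\downarrow0$, I would set $x_n=\alpha_k^{p_n/(p_n-q_n)}$ for $n\in B_k$, so that the block's $q$-mass is $s_k:=\sum_{n\in B_k}\alpha_k^{\gamma_n}$ while term by term $x_n^{p_n}/x_n^{q_n}=\alpha_k^{p_n}\le\alpha_k$. Choosing the blocks (possible since every cofinite part of $I$ still has divergent sum) and the scales so that $\sum_k\alpha_k s_k<\infty$ yet $\sum_k s_k=\infty$ makes $\rho_{(p_n)}((x_n))\le\sum_k\alpha_k s_k<\infty$, so that $(x_n)\in\ell_{p_n}$.

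The crux — and the step I expect to be the main obstacle — is to guarantee that this witness escapes $\ell_{q_n}$ at \emph{every} scale, i.e. $\rho_{(q_n)}((x_n/r))=\infty$ for all $r>0$, and not merely at $r=1$. Here the elementary bound $q_n\le\gamma_n$ (immediate from $\gamma_n=\tfrac{p_nq_n}{p_n-q_n}$ and $p_n>q_n$) is the essential tool: for $r\le1$ one has $r^{-q_n}\ge1$, so the $B_k$-contribution is at least $s_k$ and the total is $\sum_k s_k=\infty$; for $r>1$ one has $r^{-q_n}\ge r^{-\gamma_n}$, so the $B_k$-contribution is at least $\sum_{n\in B_k}(\alpha_k/r)^{\gamma_n}$, and the blocks must be selected robustly enough — controlling the spread of $\gamma_n$ within each block, equivalently arranging that passing from base $\alpha_k$ to base $\alpha_k/r$ costs only a controlled factor — that this still sums to infinity for each fixed $r$. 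It is precisely in securing this scale-uniform divergence that the hypothesis ``$\sum_n\alpha^{\gamma_n}=\infty$ for \emph{every} $\alpha$'' (rather than for a single $\alpha$) is used, and carrying out the block selection with care, distinguishing if necessary whether $q_n$ stays bounded or tends to infinity along $I$, is where the real work lies.
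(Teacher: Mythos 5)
The paper does not prove this proposition at all: it is quoted as Nakano's Lemma with a citation to \cite{nakano_modulared_1951}, so there is no in-paper argument to compare against and your proposal must stand on its own. Your reduction to one-sided inclusions and your sufficiency half are correct and complete: the Young-inequality estimate $(\alpha t)^{q_n}\le t^{p_n}+\alpha^{\gamma_n}$ on $\{p_n>q_n\}$, combined with $|x_n/r|\le 1$ once the modular is normalized to be at most $1$, does yield $\ell_{p_n}\subseteq\ell_{q_n}$ from $\sum_{p_n>q_n}\alpha^{\gamma_n}<\infty$.

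The necessity half, however, has a genuine gap, and it is exactly the one you flag yourself: you never carry out the block selection that makes $\rho_{(q_n)}((x_n/r))=\infty$ for \emph{every} $r>1$. This is not a routine detail. Your only tool for absorbing the scale $r$ is $r^{-q_n}\ge r^{-\gamma_n}$, which converts the block contribution into $\sum_{n\in B_k}(\alpha_k/r)^{\gamma_n}$; but a block normalized by $\sum_{n\in B_k}\alpha_k^{\gamma_n}\ge 1$ gives no lower bound at the smaller base $\alpha_k/r$ when $\gamma_n$ is large on $B_k$, and $\gamma_n/q_n=p_n/(p_n-q_n)$ is unbounded precisely in the interesting regime $p_n-q_n\to 0$. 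Building the safety margin into the block (requiring $\sum_{B_k}(\alpha_k/k)^{\gamma_n}\ge1$) then inflates $\sum_{B_k}\alpha_k^{\gamma_n}$ by factors $k^{\gamma_n}$ and destroys the summability of the $p$-modular, so the two requirements pull in opposite directions and a more careful stratification (e.g.\ by the size of $q_n$) is unavoidable. A cleaner route for necessity avoids the infinite witness altogether: if $\ell_{p_n}\subseteq\ell_{q_n}$, the inclusion is bounded with some norm $C\ge1$; testing it on finite sections $t_n=\theta^{1/p_n}(2C)^{-\gamma_n/p_n}$, $n\in F\subseteq\{p_n>q_n\}$, with $\theta=\left(\sum_F(2C)^{-\gamma_n}\right)^{-1}$ gives $\rho_{(p_n)}((t_n))=1$ while $(t_n/C)^{q_n}\ge 2\theta(2C)^{-\gamma_n}$, so $1\ge\sum_F(t_n/C)^{q_n}\ge 2$ whenever $\sum_F(2C)^{-\gamma_n}$ can be made large; hence $\sum_{p_n>q_n}(2C)^{-\gamma_n}<\infty$ and $\alpha=(2C)^{-1}$ works. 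I would recommend replacing your unfinished construction by an argument of this quantitative type.
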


Given a subset $A=\{n_k\}_{k\in I}\subset \mathbb{N}$, we denote 
$$
\ell_{p_n}(A):=\left\{(x_k)\in\mathbb R^{I}:\rho_{(p_n)}\left(\left(\frac{x_k}{r}\right)\right)=\sum_{k\in I}^\infty\left|\frac{x_k}{r}\right|^{p_{n_k}}\,<\infty,\mbox{ for some }r>0\right\}.
$$
For a partition $A\sqcup B=\mathbb{N}$, we have $\ell_{p_n}=\ell_{p_n}(A)\oplus\ell_{p_n}(B)$. We can also consider exponent sequences $(p_n)$ with $1\leq p_n\leq \infty$ by denoting $A=\{n\in\mathbb{N}: p_n=\infty\}$ and taking $\ell_{p_n}=\ell_\infty(A)\oplus\ell_{p_n}(B)$.
Denote the upper limit of a sequence $\varlimsup_n p_n=\lim_n\sup_{k\geq n} p_k$ and the lower limit $\varliminf_n p_n=\lim_n\inf_{k\geq n} p_k$. A Nakano sequence space $\ell_{p_n}$ is separable if $\varlimsup_n p_n<\infty$ and reflexive if, moreover, $\varliminf_n p_n>1$. This occurs since: 

\begin{prop}\label{ele.infinito}
    Let $1\leq p_n<\infty$ be an unbounded exponent sequence. Then there exists a subsequence $(p_{n_k})$ such that $\ell_{p_{n_k}}=\ell_\infty$.  
\end{prop}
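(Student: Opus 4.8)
The plan is to realize the equality $\ell_{p_{n_k}}=\ell_\infty$ by a greedy choice of indices along which the exponents grow fast enough that the Luxemburg modular forces membership to coincide exactly with boundedness. I would split the argument into the inclusion that holds for free and the reverse one, which is where the growth hypothesis is spent.

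First I would record that $\ell_{p_n}\subseteq\ell_\infty$ holds for \emph{every} exponent sequence with $p_n\ge 1$, with no growth assumption: if $(x_n)\in\ell_{p_n}$ then $\sum_n|x_n/r|^{p_n}<\infty$ for some $r>0$, so in particular $|x_n/r|^{p_n}\to 0$. Were $(x_n)$ unbounded, there would be a subsequence with $|x_{m_j}|\to\infty$; for $j$ large one has $|x_{m_j}/r|>1$, and since $p_{m_j}\ge 1$ this gives $|x_{m_j}/r|^{p_{m_j}}\ge|x_{m_j}/r|\to\infty$, contradicting that the terms vanish. Consequently $\ell_{p_{n_k}}\subseteq\ell_\infty$ for any subsequence whatsoever, so the entire content lies in the reverse inclusion.

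Next, using that $(p_n)$ is unbounded, that is $\sup_n p_n=\infty$, I would choose inductively a strictly increasing sequence of indices $n_1<n_2<\cdots$ with $p_{n_k}\ge k$ for all $k$. I claim this makes $\ell_\infty\subseteq\ell_{p_{n_k}}$. Indeed, take $(x_k)\in\ell_\infty$, put $M:=\sup_k|x_k|$, fix $r>M$, and set $\theta:=M/r<1$. Then $|x_k/r|\le\theta$ for every $k$, so
\[
\sum_{k=1}^\infty\Big|\frac{x_k}{r}\Big|^{p_{n_k}}\le\sum_{k=1}^\infty\theta^{\,p_{n_k}}\le\sum_{k=1}^\infty\theta^{\,k}=\frac{\theta}{1-\theta}<\infty ,
\]
where the middle inequality uses $\theta<1$ together with $p_{n_k}\ge k$. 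Thus the modular is finite for this $r$, whence $(x_k)\in\ell_{p_{n_k}}$.

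Combining the two inclusions yields $\ell_{p_{n_k}}=\ell_\infty$ as sets, and since both are Banach function spaces over $\mathbb{N}$ this set equality already gives that they are isomorphic, as recalled in the preliminaries. I do not expect a serious obstacle: the only points requiring care are that Nakano's Lemma (Proposition~\ref{Nakano-igualdad}) cannot be invoked directly, since its statement presupposes finite exponents whereas $\ell_\infty$ corresponds formally to $q_n\equiv\infty$, so the equality must be checked at the level of the modular; and that the rate $p_{n_k}\ge k$ is merely convenient, as any choice guaranteeing $\sum_k\theta^{\,p_{n_k}}<\infty$ for every $\theta<1$ would serve equally well.
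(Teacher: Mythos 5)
Your proposal is correct and follows essentially the same route as the paper: select a subsequence with $p_{n_k}\geq k$, show $\ell_{p_{n_k}}\subseteq\ell_\infty$ from $p_n\geq 1$ alone, and get the reverse inclusion by bounding the modular by a geometric series (the paper simply fixes $r=2\lVert (x_n)\rVert_\infty$ where you take any $r>M$). The extra remarks about not invoking Nakano's Lemma and about the flexibility of the growth rate are accurate but not needed.
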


\begin{proof}
    Consider a subsequence $(p_{n_k})$ satisfying $p_{n_k}\geq k$. Let us show that $\ell_{p_{n_k}}=\ell_\infty$.
    If $(x_k)\not\in\ell_\infty$, then for every $\lambda>0$,
    $$
    \sum_{k=1}^{\infty} \left( \frac{\vert x_k\vert}{\lambda}\right)^{p_{n_k}}=\infty,
    $$
    so $(x_n)\not\in\ell_{p_{n_k}}$.
    If $(x_k)\in\ell_\infty$, then for $\lambda=2\lVert (x_n)\rVert_{\infty}$,
    $$
    \sum_{k=1}^{\infty} \left( \frac{\vert x_k\vert}{2\lVert (x_n)\rVert_{\infty}}\right)^{p_{n_k}}
    \leq \sum_{k=1}^{\infty} \left( \frac{1}{2}\right)^{k}=1<\infty,
    $$
    so $(x_k)\in \ell_{p_{n_k}}$.
\end{proof}

Also, for separable Nakano sequence spaces, a sequence $(x_n^k)_k$ is seminormalized if and only if its modular is lower and upper bounded.

Just like in classical sequence spaces $\ell_p-\ell_q$, the inclusion between Nakano sequence spaces $i: \ell_{p_n}\hookrightarrow\ell_{q_n}$ holds from below to above, i.e. $p_n\leq q_n$ (see \cite{diening_lebesgue_2011} Lemma 3.3.6). Nonetheless it is possible that $\ell_{p_n}\subset\ell_{q_n}$ even if $p_n\geq q_n$ as the following result shows:

\begin{thm}[cf. \cite{diening_lebesgue_2011} Theorem 3.3.7]
Let $1\leq p_n, q_n\leq \infty$ for every $n$ and $\frac{1}{r_n}:=\max\{0,\frac{1}{q_n}-\frac{1}{p_n}\}$. If $\mathds{1} \in\ell_{r_n}$, then the inclusion operator $i:\ell_{p_n}\hookrightarrow\ell_{q_n}$ holds.
\end{thm}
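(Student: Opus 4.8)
The plan is to establish the set inclusion $\ell_{p_n}\subseteq\ell_{q_n}$; by the Banach function space fact recalled above (cf. \cite{bennett_interpolation_1988}, Thm.~1.8, Cor.~1.9) this automatically upgrades to boundedness of $i$. So I fix $(x_n)\in\ell_{p_n}$, choose $r>0$ with $M:=\sum_n|x_n/r|^{p_n}<\infty$, and aim to produce an $s>0$ with $\sum_n|x_n/s|^{q_n}<\infty$. Note first that $M<\infty$ forces $\sup_n|x_n/r|<\infty$, since each term satisfies $|x_n/r|\le M^{1/p_n}\le\max\{1,M\}$; so $(x_n/r)$ is bounded.

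The key observation is that on the index set $B:=\{n:q_n<p_n\}$ one has $\tfrac1{q_n}=\tfrac1{p_n}+\tfrac1{r_n}$, which rearranges to $\tfrac{q_n}{p_n}+\tfrac{q_n}{r_n}=1$; that is, $p_n/q_n$ and $r_n/q_n$ form a pair of conjugate exponents. I would therefore apply Young's inequality $AB\le \tfrac1\sigma A^\sigma+\tfrac1\tau B^\tau$ with $\sigma=p_n/q_n$, $\tau=r_n/q_n$ to the factorization $|x_n/s|^{q_n}=A_nB_n$, where $A_n=|x_n/r|^{q_n}$ and $B_n=(r/s)^{q_n}$. Since $A_n^\sigma=|x_n/r|^{p_n}$ and $B_n^\tau=(r/s)^{r_n}$, and since $q_n/p_n,\,q_n/r_n\le1$, this yields the pointwise bound $|x_n/s|^{q_n}\le|x_n/r|^{p_n}+(r/s)^{r_n}$ on $B$. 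On the complementary set $A:=\{n:q_n\ge p_n\}$, where $r_n=\infty$, I would instead use plain monotonicity: once $s$ is large enough that $|x_n/s|\le1$, the inequality $q_n\ge p_n$ gives $|x_n/s|^{q_n}\le|x_n/s|^{p_n}\le|x_n/r|^{p_n}$ provided $s\ge r$.

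It remains to choose $s$. Using $\mathds 1\in\ell_{r_n}$ I fix $\lambda\ge1$ with $\sum_n(1/\lambda)^{r_n}<\infty$ and set $s:=\max\{\lambda r,\,r\sup_k|x_k/r|\}$, so that simultaneously $\varepsilon:=r/s\le1/\lambda$ and $|x_n/s|\le1$ for all $n$. Summing the two pointwise estimates then gives
$$\sum_n\Big|\frac{x_n}{s}\Big|^{q_n}\le \sum_{A}\Big|\frac{x_n}{r}\Big|^{p_n}+\sum_{B}\Big(\Big|\frac{x_n}{r}\Big|^{p_n}+\varepsilon^{r_n}\Big)\le 2M+\sum_n\varepsilon^{r_n}<\infty,$$
so $(x_n)\in\ell_{q_n}$, as desired. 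I expect the main obstacle to lie not in the Young estimate itself but in this simultaneous calibration of $s$: it must be large enough both to make the $r_n$-tail $\sum_n\varepsilon^{r_n}$ summable via the hypothesis $\mathds 1\in\ell_{r_n}$ and to force $|x_n/s|\le1$ on $A$. The remaining care concerns the degenerate indices where $p_n=\infty$ or $q_n=\infty$, on which Young's inequality must be read in a limiting sense; there I would argue by hand, noting that indices with $q_n=\infty$ fall into $A$ and are controlled by $\ell_\infty$-boundedness, while indices with $p_n=\infty$ have $r_n=q_n$ and are handled directly (Young degenerates to $\tau=1$), again invoking $\mathds 1\in\ell_{r_n}$.
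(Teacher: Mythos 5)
The paper gives no proof of this theorem --- it is quoted from the cited source (Theorem 3.3.7 of \cite{diening_lebesgue_2011}) --- so there is no internal proof to compare against; judged on its own, your argument is essentially correct and is the standard one (it mirrors the Young's-inequality proof of the function-space version in that reference). The core is sound: on $B=\{n: q_n<p_n\}$ the exponents $p_n/q_n$ and $r_n/q_n$ are indeed conjugate, Young's inequality gives $|x_n/s|^{q_n}\le |x_n/r|^{p_n}+(r/s)^{r_n}$, and on $A=\{n:q_n\ge p_n\}$ monotonicity of $t\mapsto t^{q}$ on $[0,1]$ suffices. The one place the written argument does not quite close is the degenerate set $\{n: p_n=\infty,\ q_n<\infty\}$, where $r_n=q_n$: your choice $s=\max\{\lambda r,\ r\sup_k|x_k/r|\}$ only guarantees $|x_n/s|\le 1$ there, which yields the non-summable bound $|x_n/s|^{q_n}\le 1$. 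You need $|x_n/s|\le 1/\lambda$ on those indices so that $\sum |x_n/s|^{q_n}\le\sum(1/\lambda)^{r_n}<\infty$; this is achieved by the slightly larger choice $s:=\lambda r\max\{1,\sup_k|x_k/r|\}$, which still gives $r/s\le 1/\lambda$ and $|x_n/s|\le 1$ everywhere, so nothing else in the proof changes. Two cosmetic points: the final bound should read $M+\sum_{n\in B}\varepsilon^{r_n}$ rather than $2M+\sum_n\varepsilon^{r_n}$ (the two partial sums of $|x_n/r|^{p_n}$ over $A$ and $B$ together equal $M$, and the sum of $\varepsilon^{r_n}$ should be restricted to $B$, or $\lambda>1$ insisted upon, to avoid interpreting $1^{\infty}$); and the upgrade from set inclusion to boundedness via the Banach function space closed-graph argument is exactly the route the paper itself endorses in the preliminaries, so that reliance is fine.
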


Note that, pairing with Proposition \ref{Nakano-igualdad}, this result means that the inclusion $i:\ell_{p_n}\rightarrow\ell_{q_n}$ can hold whenever $p_n\geq q_n$ if $(p_n)$ and $(q_n)$ are close enough. But so close that even $\ell_{p_n}=\ell_{q_n}$.

The canonical unit sequence $(e_n)$ is a disjoint Schauder basis in every separable $\ell_{p_n}$ (see \cite{lindenstrauss_classical_1996} Thm.4.d.3). We refer to \cite{lindenstrauss_classical_1996} for basic notions and results of Schauder basis.
Instead of proving that the inclusion $i:\ell_{p_n}\hookrightarrow\ell_{q_n}$ is strictly singular, we will just show that the inclusion is \textit{disjointly strictly singular} (meaning that the inclusion $i$ is not an isomorphism for any closed subspace spanned by a normalized pairwise disjoint sequence $(x_{n}^k)_k$ in $\ell_{p_n}$), due to the following:

\begin{prop}[\cite{hernandez-hernandez_disjointly_1990} Prop.1]\label{DSS=SS}
Let $E$ be a Banach lattice with a disjoint Schauder basis and $Y$ be a Banach space. An operator $T: E\rightarrow Y$ is disjointly strictly singular if and only if it is strictly singular.
\end{prop}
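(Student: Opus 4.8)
The statement is an equivalence, and one implication is immediate: if $T$ is strictly singular then, since a closed subspace spanned by a normalized pairwise disjoint sequence is in particular an infinite-dimensional closed subspace, $T$ cannot be an isomorphism when restricted to it, so $T$ is disjointly strictly singular. The content is the converse, which I would prove by contraposition: assuming $T$ is \emph{not} strictly singular, I would manufacture an infinite-dimensional closed subspace \emph{spanned by a disjoint sequence} on which $T$ is bounded below, contradicting disjoint strict singularity.

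So suppose $T$ is not strictly singular. Then there is an infinite-dimensional closed subspace $Z\subseteq E$ and a constant $c>0$ with $\|Tx\|\ge c\|x\|$ for all $x\in Z$. Let $(u_k)$ be the disjoint Schauder basis of $E$, with coordinate projections $P_m$ onto $\operatorname{span}\{u_1,\dots,u_m\}$ and tails $Q_m=I-P_m$. The plan is a gliding-hump (Bessaga--Pe\l czy\'nski type) construction inside $Z$: I would choose norm-one vectors $z_n\in Z$ and integers $m_0=0<m_1<m_2<\cdots$ so that each $z_n$ is essentially supported on the block $(m_{n-1},m_n]$. Two facts drive the selection. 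First, since $P_m$ has finite-dimensional range it cannot be bounded below on the infinite-dimensional space $Z$ (otherwise $Z$ would embed into $P_m E$), so for every $m$ and every $\varepsilon>0$ there is a norm-one $z\in Z$ with $\|P_m z\|<\varepsilon$; this lets me make the \emph{head} of each new vector small. Second, because $z=\sum_k u_k^\ast(z)u_k$ converges, once $z_n$ is fixed I can pick $m_n>m_{n-1}$ with $\|Q_{m_n}z_n\|<\varepsilon_n$, killing the \emph{tail}. Setting $w_n:=(P_{m_n}-P_{m_{n-1}})z_n$ and $\delta_n:=\|z_n-w_n\|$, I obtain $\delta_n\le\|P_{m_{n-1}}z_n\|+\|Q_{m_n}z_n\|<2\varepsilon_n$, with the $\varepsilon_n$ chosen summable and as small as I like.

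The sequence $(w_n)$ is a block basic sequence of $(u_k)$, hence seminormalized and basic with basis constant $K$ no larger than that of $(u_k)$; I would normalize it. Crucially, since the $u_k$ are pairwise disjoint in the lattice and consecutive blocks use disjoint index sets, the $w_n$ are \emph{pairwise disjoint}, i.e.\ $|w_n|\wedge|w_{n'}|=0$ for $n\ne n'$. It then remains to transfer the lower bound from $\overline{\operatorname{span}}(z_n)\subseteq Z$ to $\overline{\operatorname{span}}(w_n)$. Writing a generic element as $\sum a_n w_n$ and using that for a normalized basic sequence $|a_n|\le 2K\|\sum a_j w_j\|$, a direct perturbation estimate gives
$$\|T\textstyle\sum a_n w_n\|\ \ge\ c\|\textstyle\sum a_n z_n\|-\|T\|\textstyle\sum_n|a_n|\,\delta_n\ \ge\ c\|\textstyle\sum a_n z_n\|-2K\|T\|\big(\textstyle\sum_n\delta_n\big)\|\textstyle\sum a_n w_n\|,$$
together with $\|\sum a_n z_n\|\ge\big(1-2K\sum_n\delta_n\big)\|\sum a_n w_n\|$. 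Choosing $\sum_n\varepsilon_n$ (hence $\sum_n\delta_n$) small enough relative to $c$, $\|T\|$ and $K$ makes the right-hand side at least $\tfrac{c}{2}\|\sum a_n w_n\|$, so $T$ is bounded below on $\overline{\operatorname{span}}(w_n)$, i.e.\ an isomorphism onto its image. As $(w_n)$ is a normalized disjoint sequence, this contradicts disjoint strict singularity and closes the contrapositive.

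The only genuinely delicate points are bookkeeping. I would need to verify carefully that the finite-rank projections $P_m$ fail to be bounded below on $Z$ (so that heads can be annihilated), and to tune the summable sequence $(\varepsilon_n)$ so that the perturbation \emph{simultaneously} keeps $(w_n)$ equivalent to $(z_n)$ and preserves the lower bound for $T$. I expect the extraction of the almost-disjoint sequence --- the gliding hump combined with the stability-of-basic-sequences estimate --- to be the main step; the disjointness of block bases of a disjoint basis and the trivial implication are routine.
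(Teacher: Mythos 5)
Your gliding-hump argument is correct and is essentially the standard proof of this result; the paper itself does not reprove the proposition but cites it from \cite{hernandez-hernandez_disjointly_1990}, where the argument is of the same Bessaga--Pe\l czy\'nski type (extract from any subspace on which $T$ is bounded below a normalized sequence that is a small perturbation of a disjoint block basis of the disjoint Schauder basis, then transfer the lower bound). The only cosmetic slip is attributing seminormalization of $(w_n)$ to its being a block basic sequence --- it actually follows from $\lVert w_n - z_n\rVert < 2\varepsilon_n$ with $\lVert z_n\rVert = 1$ --- but this does not affect the proof.
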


\section{Main Results}

We consider first the inclusions between separable Nakano sequence spaces.

\begin{thm}\label{Thm.base}
Let $1\leq p_n, q_n\leq M<\infty$ and the inclusion $i:\ell_{p_n}\hookrightarrow\ell_{q_n}$ hold. Then, the inclusion $i$ is strictly singular if and only if
$$
\varliminf_{n\rightarrow\infty}\vert p_n-q_n\vert>0
$$
i.e., the exponent sequences $(p_n)$ and $(q_n)$ do not share any common accumulation point.
\end{thm}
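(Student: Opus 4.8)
The plan is to prove both implications through the disjoint-strict-singularity reduction. Since $p_n,q_n\le M<\infty$, both spaces are separable and carry the canonical unit vectors $(e_n)$ as a disjoint Schauder basis, so by Proposition \ref{DSS=SS} it suffices to work with disjointly supported sequences: I will show that $i$ is strictly singular exactly when it fails to be an isomorphism on the closed span of any normalized pairwise disjoint sequence in $\ell_{p_n}$.

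For the \emph{necessity} of the condition I argue by contraposition. If $\varliminf_n|p_n-q_n|=0$ there is a subsequence along which $|p_{n_j}-q_{n_j}|\to 0$; I may assume $p_{n_j}\neq q_{n_j}$ for all $j$, since infinitely many equalities would already produce a coordinate subspace on which $i$ is the identity. Then the Nakano exponent satisfies $\frac{p_{n_j}q_{n_j}}{|p_{n_j}-q_{n_j}|}\ge \frac{1}{|p_{n_j}-q_{n_j}|}\to\infty$, so I can extract a sparse sub-subsequence $(m_i)$ with $\frac{p_{m_i}q_{m_i}}{|p_{m_i}-q_{m_i}|}\ge i$, giving $\sum_i (1/2)^{p_{m_i}q_{m_i}/|p_{m_i}-q_{m_i}|}\le \sum_i 2^{-i}<\infty$. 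By Nakano's Lemma (Proposition \ref{Nakano-igualdad}) this forces $\ell_{p_{m_i}}=\ell_{q_{m_i}}$ as sets, so on $Z=\overline{\operatorname{span}}\{e_{m_i}\}$ the two norms are equivalent and $i|_Z$ is an isomorphism onto its image; hence $i$ is not strictly singular. This is the shorter half and uses only Nakano's Lemma together with the freedom to thin the subsequence.

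For \emph{sufficiency}, assume $\varliminf_n|p_n-q_n|\ge \delta>0$. The first step is to reduce to the case $p_n\le q_n$: on the set $A=\{n:q_n<p_n\}$ the gap forces $\frac{p_nq_n}{p_n-q_n}\le M^2/\delta$, and an infinite index set carrying such a bounded conjugate exponent with $q_n<p_n$ cannot satisfy the inclusion criterion of \cite{diening_lebesgue_2011} (equivalently, one exhibits an explicit $z\in\ell_{p_n}(A)\setminus\ell_{q_n}(A)$ by prescribing $|z_{m_j}|^{q_{m_j}}=1/j$). Since $i$ is assumed to hold, $A$ is finite, and discarding this finite-dimensional part does not affect strict singularity, so I may assume $q_n\ge p_n+\delta$ for every $n$. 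The crux is then a single modular estimate. Given a normalized disjoint sequence $(x^k)$ we may take $\rho_{(p_n)}(x^k)=1$, whence $|x^k_n|\le 1$ for all coordinates. For the flat combination $y_m=a_m\sum_{k=1}^m x^k$ I choose $a_m\in(0,1]$ so that $\rho_{(p_n)}(y_m)=1$; additivity of the modular over disjoint supports and $a_m^{p_n}\ge a_m^{M}$ give $1=\rho_{(p_n)}(y_m)\ge m\,a_m^{M}$, so $a_m\le m^{-1/M}\to 0$. Since $a_m|x^k_n|\le 1$ and $q_n-p_n\ge\delta$, the pointwise bound $(a_m|x^k_n|)^{q_n}\le a_m^{\delta}(a_m|x^k_n|)^{p_n}$ holds, and summing over all coordinates yields $\rho_{(q_n)}(y_m)\le a_m^{\delta}\rho_{(p_n)}(y_m)=a_m^{\delta}\to 0$. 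As modular convergence to $0$ forces norm convergence to $0$ (again using $q_n\le M$), the norm-one vectors $y_m$ are mapped by $i$ to vectors of $\ell_{q_n}$-norm tending to $0$, so $i$ is not bounded below on $\overline{\operatorname{span}}(x^k)$ and is therefore disjointly strictly singular.

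I expect the main obstacle to be the sufficiency direction, and within it the reduction to $p_n\le q_n$: the clean modular inequality that annihilates the $q$-norm of flat combinations only becomes available once the \emph{uniform} gap $q_n-p_n\ge\delta$ is in force, so the argument hinges on first using the inclusion hypothesis to confine the indices with $q_n<p_n$ to a finite set. Once that reduction is secured, the estimate above is elementary and uniform across all disjoint sequences, which is precisely the advantage of the basic-sequence approach over a direct computation of the series.
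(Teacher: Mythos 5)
Your proof is correct, but the sufficiency direction takes a genuinely different, and substantially more elementary, route than the paper. The necessity half is essentially identical to the paper's: thin the subsequence so that Nakano's Lemma (Proposition \ref{Nakano-igualdad}) applies and the two coordinate subspaces coincide. For sufficiency, the paper invokes Woo's theorem to extract from an arbitrary disjoint normalized sequence a block basis spanning a complemented copy of some $\ell_r$, splits each block according to whether $p_i\leq r-\frac{\varepsilon}{2}$, and reaches a contradiction via the Peirats--Ruiz characterization of $\ell_p$-copies by exhibiting an $\ell_k$ inside an $\ell_r$ with $k<r$. You instead first use the inclusion hypothesis to reduce to a uniform gap $q_n\geq p_n+\delta$ (a reduction the paper itself records only parenthetically), and then run a single self-contained modular computation on the normalized flat averages $y_m=a_m\sum_{k\leq m}x^k$: additivity of the modular over disjoint supports gives $a_m\leq m^{-1/M}$, and the pointwise inequality $(a_m|x_n^k|)^{q_n}\leq a_m^{\delta}(a_m|x_n^k|)^{p_n}$ yields $\rho_{(q_n)}(y_m)\leq a_m^{\delta}\to 0$, hence $\lVert i(y_m)\rVert_{\ell_{q_n}}\to 0$ while $\lVert y_m\rVert_{\ell_{p_n}}=1$. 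All steps check out (in particular $a_m|x_n^k|\leq 1$ justifies the exponent comparison, and boundedness of $(q_n)$ converts modular decay into norm decay), and the argument is uniform over all disjoint sequences, even giving the explicit rate $m^{-\delta/M}$. What the paper's approach buys is structural information about which $\ell_r$'s sit inside the spans; what yours buys is brevity and the avoidance of the external results of \cite{woo_modular_1973} and \cite{peirats_lp-copies_1992}. It is worth noting that your computation is closer in spirit to the direct series estimates of \cite{lang_embeddings_2025}, which the paper explicitly positions itself against, although you still organize everything through disjoint sequences and Proposition \ref{DSS=SS}. The only point to tighten is the phrase about the inclusion ``criterion'' of Diening et al.: that criterion is sufficient, not necessary, for the inclusion, so the reduction must rest on your explicit element $z\in\ell_{p_n}(A)\setminus\ell_{q_n}(A)$ (which does work), not on the failure of the criterion.
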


\begin{proof}
$(\Rightarrow):$ Suppose that there exists a subsequence $(n_k)$ such that $\vert p_{n_k}-q_{n_k}\vert \xrightarrow{k\rightarrow\infty}0$. Then, up to passing to another subsequence, we can suppose that $\vert p_{n_k}-q_{n_k}\vert<\frac{1}{k}$. Taking $\alpha=\frac{1}{2}$, we have that
    $$
    \sum_{k=1}^{\infty}\alpha^{\frac{p_{n_k}q_{n_k}}{\vert p_{n_k}-q_{n_k}\vert}}
    \leq \sum_{k=1}^{\infty}\left(\frac{1}{2}\right)^{\frac{1}{\vert\frac{1}{k}\vert}}
    =\sum_{k=1}^{\infty} \frac{1}{2^k}
    =1
    <\infty,
    $$
    and thus, by Proposition \ref{Nakano-igualdad}, the sequence spaces 
    $$
    [e_{n_k}]_{\ell_{p_n}}\simeq\ell_{p_{n_k}}=\ell_{q_{n_k}}\simeq[e_{n_k}]_{\ell_{q_n}}
    $$
    coincide. Therefore, the inclusion $i: \ell_{p_n}\hookrightarrow \ell_{q_n}$ is not strictly singular.
    
    $(\Leftarrow):$ Let $\varliminf_{n\rightarrow\infty} \vert p_n-q_n\vert=\varepsilon>0$. Let $(s_j)$ be a pairwise disjoint normalized sequence in $\ell_{p_n}$. Then, $(s_j)$ is a basic sequence, so there exists an $1\leq r<\infty$ such that $\ell_r$ is isomorphic to a complemented subspace in $\ell_{p_n}$ (\cite{woo_modular_1973} Thm.3.8) generated by some block basic subsequence of $(s_j)$ (besides, of disjoint blocks) (\cite{lindenstrauss_classical_1996} Prop.1.a.11). Further, if $i\vert_{[s_j]}$ was an isomorphism, then $i$ would also be an $\ell_r$-isomorphism when restricted to the block basic subsequence of $(s_j)$. So, we can suppose without loss of generality that $(s_j)$ is itself a block basic subsequence of the canonical basis $(e_n)$ in $\ell_{p_n}$ which is equivalent to the canonical basis of $\ell_r$.
    
    Let us denote
    $$
    s_j=\sum_{i\in\sigma_j}a_{ji} \, e_i \,,
    $$
    where every $\sigma_j\subset\mathbb{N}$ is finite and $\max \sigma_{j}<\min \sigma_{j+1}$ for every $j$.
    Now we can write
    $$
    s_j=v_j+u_j \,,
    $$
    where
    $$
    v_j=\sum_{\{i\in\sigma_j : \, p_i\leq r-\frac{\varepsilon}{2}\}} a_{ji}\, e_i
    $$
    and 
    $$
    u_j=\sum_{\{i\in\sigma_j :\, p_i>r-\frac{\varepsilon}{2}\}} a_{ji}\, e_i.$$    
    We distinguish now two cases:
    \begin{itemize}
        \item [$(1)$] If $\lVert v_j\rVert_{p_n}\rightarrow 0$ (or up to a subsequence), we have $[s_j]_{\ell_{p_n}}\simeq [u_j]_{\ell_{p_n}}$ (again up to subsequence, cf. \cite{lindenstrauss_classical_1996} Prop.1.a.9). Then, since $q_i> p_i + \varepsilon> r-\frac{\varepsilon}{2}+\varepsilon = r+ \frac{\varepsilon}{2}$ for $i\in\sigma_j$ up to finite many $i$ (note that if $p_i>q_i$ for infinite many $i$ while $\varliminf_{n\rightarrow\infty} \vert p_n-q_n\vert=\varepsilon>0$, the inclusion does not hold), we have $[i(u_j)]_{\ell_{q_n}}\subset \ell_{q_n}(A)$ for $A=\{n\in\mathbb{N}: q_n>r+\frac{\varepsilon}{2}\}$, and this subspace cannot be isomorphic to $\ell_r$ because $r$ is not an accumulation point of $\{q_n: q_n>r+\frac{\varepsilon}{2}\}$ (\cite{peirats_lp-copies_1992} Theorem 2.9), which gives a contradiction.
        
        \item [$(2)$] Assume now $1\geq \lVert v_j\rVert_{p_n}\geq\delta>0$ (up to some finite many $j$). Consider the subspace $[v_j]_{\ell_{p_n}}$ generated by $(v_j)$ in $\ell_{p_n}(B)$ for $B=\{n\in\mathbb{N}: p_n\leq r-\frac{\varepsilon}{2}\}$. Reasoning as above, there exists an scalar $k\leq r-\frac{\varepsilon}{2}<r$ such that the canonical basis of $\ell_k$ is equivalent to certain block basic subsequence $(t_m)$ of $(v_j)$. Let us write
        $$
        t_m=\sum_{j\in H_m} b_{mj} v_j,
        $$
        with $H_m\subset\mathbb{N}$ finite for every $m$, and $\max H_m<\min H_{m+1}$. Let us define
        $$
        \tilde{t}_m:=\sum_{j\in H_m}b_{mj} s_j
        =\sum_{j\in H_m}b_{mj} v_j + \sum_{j\in H_m}b_{mj} u_j
        =t_m + \sum_{j\in H_m}b_{mj} u_j
        $$
        and let us see that $(\tilde{t}_m)$ is a seminormalized block basic subsequence of $(s_j)$. If $\rho_{(p_n)}(u_j)\xrightarrow{j\rightarrow\infty}0$, then, up to a subsequence, we have $[v_j]_{\ell_{p_n}}\simeq[s_j]_{\ell_{p_n}}$ which leads to a contradiction as $[v_j]_{\ell_{p_n}}\simeq[s_j]_{\ell_{p_n}}\simeq\ell_r$ by hypothesis, so it cannot have a subspace ($[t_m]_{\ell_{p_n}} \subset [v_j]_{\ell_{p_n}}$) isomorphic to $\ell_k$ for $k< r$. Therefore,
        $$
        1\geq\rho_{(p_n)}(v_j)\geq\delta_1>0
        $$
        and
        $$
        1\geq\rho_{(p_n)}(u_j)\geq\delta_2>0.
        $$
        
        Also, $t_m$ is seminormalized, so there exist $c,C>0$ with $c\leq \rho_{(p_n)}(t_m)\leq C$ for every $m$. Hence, we have
        \begin{align*}
            C
            \geq{} &   \rho_{(p_n)}\left(\sum_{j\in H_m} \vert b_{mj}\vert v_j \right)
            = \sum_{j\in H_m}\left(\sum_{\{i\in\sigma_j, p_i\leq r-\frac{\varepsilon}{2}\}} \vert b_{mj}\vert^{p_i}\vert a_{ji}\vert^{p_i}\right)\\
            \geq{}    &   \sum_{j\in H_m}\vert b_{mj}\vert ^{r-\frac{\varepsilon}{2}}\rho_{(p_n)}(v_j)\frac{\delta_1}{\delta_1}
            \geq  \delta_1 \sum_{j\in H_m} \vert b_{mj}\vert^{r-\frac{\varepsilon}{2}}\rho_{(p_n)}(u_j)\\
            ={}    &   \delta_1 \sum_{j\in H_m}\vert b_{mj}\vert^{r-\frac{\varepsilon}{2}}\left(\sum_{\{i\in\sigma_j, p_i>r-\frac{\varepsilon}{2}\}} \vert a_{ji}\vert^{p_i}\right)\\
            \geq{}  &   \delta_1 \sum_{j\in H_m} \left(\sum_{\{i\in\sigma_j, p_i>r-\frac{\varepsilon}{2}\}}\vert b_{mj}\vert^{p_i}\vert a_{ji}\vert^{p_i}\right)\\
            ={} &   \delta_1 \, \rho_{(p_n)}\left(\sum_{j\in H_m}b_{mj} u_j \right).
        \end{align*}
        Thus, we have $\frac{C}{\delta_1}\geq \rho_{(p_n)}\left(\sum_{j\in H_m} b_{mj}u_j\right)$ and, therefore,
        $$
        c
        \leq\rho_{(p_n)}(t_m)
        \leq \rho_{(p_n)}(\tilde{t}_m)
        \leq \rho_{(p_n)}(t_m) + \rho_{(p_n)}\left( \sum_{j\in H_m} b_{mj}u_j\right)
        \leq 1+\frac{C}{\delta_1}.
        $$
        Hence $(\tilde{t}_m)$ is a semi-normalized block basic subsequence of $(s_j)$.
    \end{itemize}
        
    Finally, let us see that $[\tilde{t}_m]_{\ell_{p_n}} = \ell_k$, which will give a contradiction since $[\tilde{t}_m]_{\ell_{p_n}}\subset [s_j]_{\ell_{p_n}}= \ell_r$. Assume that
    $$
    \rho_{(p_n)}\left(\sum_{m=1}^{\infty} x_m \tilde{t}_m\right)
    <\infty.
    $$
    Then,
    $$
    \rho_{(p_n)}\left(\sum_{m=1}^{\infty} x_m t_m\right)
    <\infty
    $$
    and it follows that $(x_m)\in\ell_k$ since $[t_m]_{\ell_{p_n}}=\ell_k$. Thus, $[\tilde{t}_m]_{\ell_{p_n}}\subset \ell_k$.
    
    On the other hand, if $(x_m)\in\ell_k \subset\ell_r$, then $\rho_{(p_n)}\left(\sum_m x_m \left( \sum_{j\in H_m} b_{mj} v_j \right) \right)<\infty$. Furthermore, the basic sequence $(s_j)$ is equivalent to the canonical basis of $\ell_r$ and $(\tilde{t}_m)$ is a seminormalized block basis of $(s_j)$, so $(\tilde{t}_m)$ is equivalent to the canonical basis of $\ell_r$ too. Hence, if $(x_m)\in\ell_k\subset\ell_r$, we have that
    $$
    \rho_{(p_n)}\left(\sum_{m=1}^{\infty} x_m\tilde{t}_m\right)
    <\infty.
    $$
    Thus, $\ell_k\subset [\tilde{t}_m]_{\ell_{p_n}}$ and we arrive to
    $$
    \ell_k\simeq[\tilde{t}_m]_{\ell_{p_n}}\subset [s_j]_{\ell_{p_n}}\simeq\ell_r,
    $$
    what gives a contradiction.
\end{proof}

We can extend Theorem \ref{Thm.base} easily to the case of unbounded exponent sequences $(q_n)$:

\begin{thm}\label{Thm.infinito1}
    Let $1\leq p_n\leq M<\infty$ and $1\leq q_n\leq\infty$ with $\varlimsup_n q_n=\infty$ and let the inclusion $i:\ell_{p_n}\hookrightarrow\ell_{q_n}$ hold. Then the inclusion $i$ is strictly singular if and only if
    $$
    \varliminf_{n\rightarrow\infty}\vert p_n-q_n\vert>0.
    $$
\end{thm}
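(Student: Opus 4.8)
The plan is to invoke Proposition \ref{DSS=SS} and prove disjoint strict singularity, rerunning the block-basis scheme of Theorem \ref{Thm.base} and isolating the single point where the unboundedness of $(q_n)$ intervenes. For the direction $(\Rightarrow)$ I would argue by contrapositive exactly as in Theorem \ref{Thm.base}: if $\varliminf_n\vert p_n-q_n\vert=0$, choose a subsequence with $\vert p_{n_k}-q_{n_k}\vert\to 0$; since $p_{n_k}\le M$, we automatically get $q_{n_k}\le M+\vert p_{n_k}-q_{n_k}\vert$ bounded, so on this subsequence the unboundedness of $(q_n)$ plays no role. Passing to a further subsequence with $\vert p_{n_k}-q_{n_k}\vert<1/k$ and applying Nakano's Lemma (Proposition \ref{Nakano-igualdad}) with $\alpha=\tfrac12$ yields $\ell_{p_{n_k}}=\ell_{q_{n_k}}$, whence $i$ restricted to $[e_{n_k}]$ is an isomorphism and $i$ is not strictly singular.

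For $(\Leftarrow)$, assume $\varliminf_n\vert p_n-q_n\vert=\varepsilon>0$ and, towards a contradiction, that $i$ is not disjointly strictly singular. I would then produce a normalized disjoint sequence $(s_j)$ on whose closed span $i$ is an isomorphism; after passing to a block basic subsequence via \cite{woo_modular_1973} and \cite{lindenstrauss_classical_1996}, I may assume $[s_j]_{\ell_{p_n}}\simeq\ell_r$ with $1\le r\le M$ (the value $r$ being an accumulation point of the exponents $p_i$ on the supports, hence $\le\varlimsup_n p_n\le M$). Splitting $s_j=v_j+u_j$ according to $p_i\le r-\tfrac{\varepsilon}{2}$ versus $p_i>r-\tfrac{\varepsilon}{2}$ gives the two cases of Theorem \ref{Thm.base}. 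I would note first that Case $(2)$, where $\lVert v_j\rVert$ stays bounded below, carries over verbatim: its estimates involve only the domain modular $\rho_{(p_n)}$ and the domain exponents (all $\le M$), and never refer to $(q_n)$, so it still forces an $\ell_k$ with $k<r$ inside $[s_j]_{\ell_{p_n}}\simeq\ell_r$, a contradiction. It is worth recording that one cannot instead simply factor the inclusion through $\ell_M$ on the set $\{q_n>M\}$, since the obstruction in the two factors can sit on different index sets, so a direct argument is genuinely needed.

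The one genuinely new point, and the main obstacle, is Case $(1)$, where $\lVert v_j\rVert_{p_n}\to 0$ and hence $[s_j]_{\ell_{p_n}}\simeq[u_j]_{\ell_{p_n}}\simeq\ell_r$. On the support of $u_j$ one has $p_i>r-\tfrac{\varepsilon}{2}$, so (using that the inclusion forbids $p_i>q_i$ for infinitely many $i$ when $\varliminf\vert p_n-q_n\vert>0$) we get $q_i>p_i+\varepsilon>r+\tfrac{\varepsilon}{2}$ for all but finitely many $i$. Thus $i(u_j)$ is a seminormalized disjoint sequence supported where the finite exponents of $(q_n)$ are $\ge r+\tfrac{\varepsilon}{2}$, together with a possible $\ell_\infty$-part coming from $\{q_n=\infty\}$, which I would isolate using Proposition \ref{ele.infinito}. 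For bounded $q_n$ the contradiction came from \cite{peirats_lp-copies_1992}, $r$ not being an accumulation point of those exponents; here I must additionally rule out that $\ell_r$ be captured by the unbounded or infinite part. This is where the argument must go beyond Theorem \ref{Thm.base}: a seminormalized disjoint sequence in $\ell_{q_n}$ supported on exponents $\ge r+\tfrac{\varepsilon}{2}$ (or $=\infty$) spans a space isomorphic to $\ell_s$ for some accumulation point $s\ge r+\tfrac{\varepsilon}{2}$ of $(q_n)$, or to $c_0$ when the exponents tend to $\infty$; since $r<s$ and $r<\infty$, none of these is isomorphic to $\ell_r$. This contradicts $[i(u_j)]_{\ell_{q_n}}\simeq[u_j]_{\ell_{p_n}}\simeq\ell_r$, closing Case $(1)$ and finishing the proof.
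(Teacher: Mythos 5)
Your $(\Rightarrow)$ direction is exactly the paper's argument (the subsequence with $\vert p_{n_k}-q_{n_k}\vert\to0$ automatically has $q_{n_k}\leq M+1$ eventually, so Nakano's Lemma applies unchanged). For $(\Leftarrow)$ you take a genuinely different and much heavier route, and in doing so you dismiss as impossible precisely what the paper does. The paper splits $\mathbb{N}=A\sqcup B$ with $A=\{n: q_n\leq M+1\}$, applies Theorem \ref{Thm.base} to $i_A:\ell_{p_n}(A)\hookrightarrow\ell_{q_n}(A)$ (both exponent sequences are bounded there), and factorizes $i_B$ as $\ell_{p_n}(B)\hookrightarrow\ell_M(B)\hookrightarrow\ell_{M+1}(B)\hookrightarrow\ell_{q_n}(B)$, which is strictly singular; since $\varliminf_{n\in B}\vert p_n-q_n\vert\geq1$, the liminf condition reduces to the one on $A$. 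Your objection that ``the obstruction in the two factors can sit on different index sets'' is unfounded: strictly singular operators between two fixed Banach spaces form a linear subspace stable under composition with bounded operators, so $i=i_AP_A+i_BP_B$ is a sum of two strictly singular operators and hence strictly singular, regardless of whether an offending subspace straddles $A$ and $B$. This reduction is the entire content of the paper's proof and turns the theorem into a short corollary of Theorem \ref{Thm.base}.

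Your direct rerun of the block-basis scheme can nevertheless be made to work, and your diagnosis that the unboundedness of $(q_n)$ intervenes only in Case $(1)$ is accurate (Case $(2)$ indeed lives entirely in the domain modular). But the one step you identify as the ``genuinely new point'' is also the one you do not actually prove: the claim that a seminormalized disjoint sequence in $\ell_{q_n}$ supported where $q_i\geq r+\tfrac{\varepsilon}{2}$ or $q_i=\infty$ must span $\ell_s$ with $s\geq r+\tfrac{\varepsilon}{2}$ or $c_0$ is a Woo/Peirats--Ruiz-type classification whose cited source \cite{peirats_lp-copies_1992} covers the separable (bounded-exponent) setting, so as written this is a gap. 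It can be closed more cheaply without any classification: on that index set every normalized disjoint sequence $(y_j)$ satisfies an upper $\ell_{s_0}$-estimate with $s_0=r+\tfrac{\varepsilon}{2}$, since for $\lVert(a_j)\rVert_{\ell_{s_0}}\leq1$ one has $\rho_{(q_n)}\left(\sum_j a_jy_j\right)\leq\sum_j\vert a_j\vert^{s_0}\rho_{(q_n)}(y_j)\leq1$ (the coordinates with $q_i=\infty$ being controlled by the sup norm), and an upper $\ell_{s_0}$-estimate with $s_0>r$ is incompatible with $(y_j)$ being equivalent to the unit basis of $\ell_r$. With that substitution your argument is complete, but it remains far longer than the paper's.
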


\begin{proof}
    Take $A=\{n\in\mathbb{N}: q_n\leq M+1\}$ and $B=\mathbb{N}\setminus A$. Then $\ell_{p_n}=\ell_{p_n}(A)\oplus\ell_{p_n}(B)$ and $\ell_{q_n}=\ell_{q_n}(A)\oplus\ell_{q_n}(B)$ and the inclusion $i$ is strictly singular if both inclusions $\ell_{p_n}(A)\hookrightarrow\ell_{q_n}(A)$ and $\ell_{p_n}(B)\hookrightarrow \ell_{q_n}(B)$ are strictly singular. For $\ell_{p_n}(A)\hookrightarrow\ell_{q_n}(A)$ we apply Theorem \ref{Thm.base}, since $(q_n)_{\vert A}$ is bounded, getting that the inclusion is strictly singular if and only if
    $$
    \varliminf_{n\rightarrow\infty,\, n\in A}\vert p_n-q_n\vert>0.
    $$
    For $\ell_{p_n}(B)\hookrightarrow \ell_{q_n}(B)$, since $p_n\leq M< M+1\leq q_n$ for every $n\in B$, we can factorize $\ell_{p_n}(B)\hookrightarrow \ell_M(B)\hookrightarrow \ell_{M+1}(B) \hookrightarrow \ell_{q_n}(B)$, where $\ell_M\hookrightarrow \ell_{M+1}$ is strictly singular, hence the inclusion is strictly singular. On the other hand, it holds that
    $$
    \varliminf_{n\rightarrow\infty,\, n\in B}\vert p_n-q_n\vert \geq 1>0.
    $$
    In conclusion, $i$ is strictly singular if and only if
    $$
    \varliminf_{n\rightarrow\infty}\vert p_n-q_n\vert >0.
    $$
\end{proof}

In the case of the exponent sequence $(p_n)$ be unbounded, then the exponent sequence $(q_n)$ is also unbounded (or either the inclusion does not hold) and the inclusion is not strictly singular since both Nakano sequence spaces $\ell_{p_n}$ and $\ell_{q_n}$ share an $\ell_{\infty}$ copy.

\begin{thm}\label{Thm.infinito2}
    Let $1\leq p_n,q_n\leq\infty$ with $\varlimsup_n p_n=\infty$ and let the inclusion $i:\ell_{p_n}\hookrightarrow\ell_{q_n}$ hold. Then the inclusion $i$ is not strictly singular.
\end{thm}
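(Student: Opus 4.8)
The plan is to exhibit an infinite-dimensional closed subspace of $\ell_{p_n}$ on which $i$ acts as an isomorphism, by locating a common $\ell_\infty$ copy inside both the domain and the target. First I would select a subset $A=\{n_k\}_k\subset\mathbb{N}$ on which $\ell_{p_n}$ collapses to $\ell_\infty$: if $p_n=\infty$ for infinitely many $n$, take those indices, so that $\ell_{p_n}(A)=\ell_\infty$ by definition; otherwise all but finitely many $p_n$ are finite and the sequence is still unbounded (since $\varlimsup_n p_n=\infty$), so Proposition \ref{ele.infinito} supplies a subsequence $(p_{n_k})$ with $\ell_{p_{n_k}}=\ell_\infty$. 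In either case $\ell_{p_n}(A)=\ell_\infty$, and the partition $\mathbb{N}=A\sqcup(\mathbb{N}\setminus A)$ gives the decomposition $\ell_{p_n}=\ell_{p_n}(A)\oplus\ell_{p_n}(\mathbb{N}\setminus A)$, so that $\ell_{p_n}(A)$ is an infinite-dimensional closed (indeed complemented) subspace of $\ell_{p_n}$.

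Next I would transfer this copy to the target space. Because the inclusion $\ell_{p_n}\hookrightarrow\ell_{q_n}$ holds as sets, restricting supports to $A$ yields $\ell_{p_n}(A)\subseteq\ell_{q_n}(A)$. Combining this with the trivial containment $\ell_{q_n}(A)\subseteq\ell_\infty$ (valid for any exponent sequence, since membership in a Nakano space forces the terms to be bounded) and with $\ell_{p_n}(A)=\ell_\infty$, I obtain
$$
\ell_\infty=\ell_{p_n}(A)\subseteq\ell_{q_n}(A)\subseteq\ell_\infty,
$$
whence $\ell_{q_n}(A)=\ell_\infty$ as sets. As a byproduct this forces $(q_n)$ to be unbounded on $A$, since otherwise $\ell_{q_n}(A)$ would be separable, contradicting the non-separability of $\ell_\infty$; this recovers the remark preceding the statement that $(q_n)$ must itself be unbounded once $(p_n)$ is.

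Finally I would invoke the Banach function space principle recalled in the introduction: two Banach function spaces over the same measure space that coincide as sets are isomorphic, the isomorphism being the identity map. Here $\ell_{p_n}(A)$ and $\ell_{q_n}(A)$ coincide (both equal $\ell_\infty$), so their Luxemburg norms are equivalent, and the restriction $i\vert_{\ell_{p_n}(A)}:\ell_{p_n}(A)\to\ell_{q_n}(A)\subseteq\ell_{q_n}$ is precisely this identity, hence an isomorphism onto its image. Thus $i$ is an isomorphism on the infinite-dimensional subspace $\ell_{p_n}(A)$, and therefore $i$ is not strictly singular.

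The argument is essentially soft once the common $\ell_\infty$ copy is identified, so I do not expect any genuine computational obstacle. The only point requiring care is the selection of $A$: one must ensure that the same index set simultaneously collapses $\ell_{p_n}$ to $\ell_\infty$ (via Proposition \ref{ele.infinito}) and, thanks to the standing hypothesis that the inclusion holds, also collapses $\ell_{q_n}$ to $\ell_\infty$, so that the two restricted spaces literally coincide and the restricted inclusion is an isomorphism rather than merely a bounded injection.
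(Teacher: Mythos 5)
Your proposal is correct and follows essentially the same route as the paper: extract an index set $A$ on which $\ell_{p_n}(A)=\ell_\infty$ via Proposition \ref{ele.infinito}, use the standing inclusion plus the trivial containment in $\ell_\infty$ to force $\ell_{q_n}(A)=\ell_\infty$ as well, and conclude that the restricted inclusion is an isomorphism. Your explicit treatment of the indices with $p_n=\infty$ and of the set-equality-implies-isomorphism step is slightly more detailed than the paper's, but the underlying argument is identical.
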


\begin{proof}
    Since $i:\ell_{p_n}\hookrightarrow\ell_{q_n}$ holds, the inclusion $\ell_{p_{n_k}}\hookrightarrow\ell_{q_{n_k}}$ also holds for every subsequence $({n_k})$. In particular, take a subsequence $(n_k)$ with $n_k>k$, where $\ell_{p_{n_k}}=\ell_\infty$ by Proposition \ref{ele.infinito}. Thus, $\ell_\infty\hookrightarrow \ell_{q_{n_k}}$. So, $\ell_{q_{n_k}}=\ell_\infty$ and $\ell_{p_{n_k}}\hookrightarrow\ell_{q_{n_k}}$ is an isomorphism, hence $i$ is not strictly singular.
\end{proof}

We give now a few examples to illustrate the different situations we can have for the inclusion $i:\ell_{p_n}\hookrightarrow\ell_{q_n}$. First, we give a trivial example, and then we construct approaching but separated enough sequences $(p_n)$ and $(q_n)$ to apply the results.
\begin{exa}
{\hfill}

\begin{enumerate}
    \item Let $p_n=1+\frac{1}{n}$ and $q_n=n$. The inclusions $\ell_1\hookrightarrow\ell_{p_n}$ and $\ell_{q_n}\hookrightarrow\ell_{\infty}$ are not strictly singular since even $\ell_{p_n}=\ell_1$ and $\ell_{q_n}=\ell_{\infty}$ by Propositions \ref{Nakano-igualdad} and \ref{ele.infinito}. The inclusion $i:\ell_{p_n}\hookrightarrow\ell_{q_n}$ is strictly singular by Theorem \ref{Thm.infinito1}.
\end{enumerate}
Now, let us take the sequence $(a_n)$ defined by $a_n:=\min\{k\in\mathbb{N}: n\leq \sum_{j=1}^k j^j\}$, where each natural $j$ appears $j^j$ times\footnote{$(a_n)=(1,2,2,2,2,3,...)$ followed by $3^3=9$ terms of $3$'s, then $4^4=256$ terms of $4$'s, then $5^5$ terms of $5$'s...}. Then, given $\alpha>0$, take $k\in\mathbb{N}$ with $\frac{1}{k}<\alpha$. We have
$$
\sum_{n=1}^{\infty}\left(\frac{1}{k}\right)^{a_n}=\sum_{j=1}^{\infty}j^j \left(\frac{1}{k}\right)^{j}
\geq\sum_{j=k}^{\infty}j^j \left(\frac{1}{k}\right)^{j}\geq \sum_{j=k}^{\infty}k^j \left(\frac{1}{k}\right)^{j}=\sum_{j=k}^{\infty} 1=\infty.
$$
Hence, $\sum_{n=1}^{\infty} \alpha^{a_n}=\infty$ for every $\alpha>0$. 
Consider the following:
\begin{enumerate}[resume]
    \item Let $(p_n)=(a_n)$. Then, $\ell_{p_n}\neq \ell_{\infty}$ since $\mathds{1}\not\in \ell_{p_n}$, but the inclusion $\ell_{p_n}\hookrightarrow\ell_{\infty}$ is not strictly singular by Theorem \ref{Thm.infinito2}.
    \item Let $(p_n)$ be bounded and $(q_n)$ with $q_n=p_n+\frac{1}{a_n}$. Then, $\ell_{p_n}\neq\ell_{q_n}$ by Proposition \ref{Nakano-igualdad}, but $\ell_{p_n}\hookrightarrow\ell_{q_n}$ is not strictly singular by Theorem \ref{Thm.base}.
\end{enumerate}

\end{exa}

We can study other classical properties for the inclusion $i:\ell_{p_n}\hookrightarrow \ell_{q_n}$, but most of them are easily proven.
Note that the inclusion $i:\ell_{p_n}\hookrightarrow \ell_{q_n}$ is not {\it compact}, neither \textit{$L$-weakly compact} (i.e. the image of the unit ball of $\ell_{p_n}$ is equi-integrable in $\ell_{q_n}$, cf. \cite{aliprantis_positive_2006,bennett_interpolation_1988,meyer-nieberg_banach_1991}) nor \textit{$M$-weakly compact} (i.e. $\lim_{n\rightarrow\infty} ||(x_n)||_{\ell_{q_n}} = 0$ for any $(x_n)$ norm bounded disjoint sequence in $\ell_{p_n}$), since the canonical unit sequence $(e_n)$ is normalized in every space $\ell_{p_n}$ and it has no convergent subsequence.
Also, $i$ is weakly compact if and only if $\ell_{q_n}$ is reflexive. Let us give a proof:

\begin{prop}
    Let the inclusion $i:\ell_{p_n}\hookrightarrow\ell_{q_n}$ hold. The inclusion $i$ is weakly compact if and only if $1<\varliminf_{n} q_n\leq\varlimsup_{n} q_n<\infty$.
\end{prop}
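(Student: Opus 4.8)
The plan is to prove the two-sided implication by first settling the easy direction from reflexivity of the range, and then handling the converse by contraposition through an explicit band decomposition of \(\mathbb{N}\). Throughout I will use the reflexivity criterion recorded in the preliminaries: if \(1<\varliminf_n q_n\le\varlimsup_n q_n<\infty\) then \(\ell_{q_n}\) is reflexive. The guiding principle is the remark preceding the statement, namely that weak compactness of \(i\) is governed by reflexivity of \(\ell_{q_n}\).

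For sufficiency I would argue directly. Assume \(1<\varliminf_n q_n\le\varlimsup_n q_n<\infty\), so that \(\ell_{q_n}\) is reflexive and hence every norm-bounded subset of \(\ell_{q_n}\) is relatively weakly compact. Since the inclusion holds, \(i\) is a bounded operator (as recalled in the introduction), so \(i(B_{\ell_{p_n}})\) is a bounded subset of \(\ell_{q_n}\) and is therefore relatively weakly compact; this is precisely the statement that \(i\) is weakly compact. This direction uses nothing beyond boundedness of \(i\) and reflexivity of the target.

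For necessity I would argue by contraposition, exhibiting a band \(A\subseteq\mathbb{N}\) on which \(i\) restricts to a non-weakly-compact operator; since \(\ell_{p_n}(A)\) is complemented in \(\ell_{p_n}\) and the band projection \(\ell_{q_n}\to\ell_{q_n}(A)\) is contractive, non-weak-compactness of the restriction propagates to \(i\). Suppose the condition fails, i.e. \(\varliminf_n q_n=1\) or \(\varlimsup_n q_n=\infty\). If \(\varliminf_n q_n=1\), pick \(q_{n_k}\to 1\); because the inclusion holds one has \(\tfrac{1}{q_{n_k}}-\tfrac{1}{p_{n_k}}\to 0\), which forces \(p_{n_k}\to 1\) as well, and after passing to a sufficiently fast-decaying subsequence \(A\), Nakano's Lemma (Proposition \ref{Nakano-igualdad}) yields \(\ell_{p_n}(A)=\ell_{q_n}(A)=\ell_1\). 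Then \(i\) restricts to an isomorphic copy of the identity of \(\ell_1\), which is not weakly compact since \(\ell_1\) is not reflexive. If instead \(\varlimsup_n q_n=\infty\), I would use Proposition \ref{ele.infinito} to choose \(A\) with \(\ell_{q_n}(A)=\ell_\infty\) and argue, as in the proof of Theorem \ref{Thm.infinito2}, that on such a band the inclusion degenerates so that \(i\) again acts as the identity of a non-reflexive space.

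The step I expect to be the main obstacle is precisely the case \(\varlimsup_n q_n=\infty\). There the difficulty is to produce, inside \(\ell_{q_n}\), a bounded set arising from \(B_{\ell_{p_n}}\) that is genuinely \emph{not} relatively weakly compact: the reduction to \(\ell_\infty\) only delivers this once the domain is large enough on that band for \(i\) to restrict to a true isomorphic copy of the identity on \(\ell_\infty\), and verifying that the hypothesis that the inclusion holds really forces such a band (rather than a reflexive piece that would map weakly-compactly into \(\ell_\infty\)) is the delicate point where Proposition \ref{ele.infinito} and the mechanism of Theorem \ref{Thm.infinito2} are indispensable.
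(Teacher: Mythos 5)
Your sufficiency argument is correct and is genuinely simpler than the paper's: the paper verifies that $i(B_{\ell_{p_n}})$ satisfies the modular criterion for relative weak compactness from Cor.~3.7 of \cite{hernandez_remarks_2024}, whereas you simply observe that a bounded operator into a reflexive space is weakly compact, using the reflexivity statement from the preliminaries. Your treatment of the case $\varliminf_n q_n=1$ also reaches the right conclusion, but it is more roundabout than necessary and leaves one step unjustified: the claim that the inclusion forces $p_{n_k}\to 1$ does require an argument (for instance: if $p_{n_k}\ge 1+\delta$ along a sub-subsequence, a sufficiently fast further subsequence $B$ gives $\ell_{1+\delta}(B)\subseteq\ell_{p_n}(B)\subseteq\ell_{q_n}(B)=\ell_1(B)$, which is absurd). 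The paper avoids needing any information about $(p_n)$ here: it only uses that the normalized set $\{e_{n_{k_j}}\}_j$ is bounded in $\ell_{p_n}$ while its image lies in $\ell_{q_n}(A)\simeq\ell_1$ and fails to be relatively weakly compact there by Schur's property.

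The genuine gap is the case $\varlimsup_n q_n=\infty$, which you correctly identify as the obstacle but do not actually resolve: the mechanism of Theorem \ref{Thm.infinito2} requires $\varlimsup_n p_n=\infty$, which is not available, and nothing in the hypotheses produces a band on which $i$ restricts to the identity of $\ell_\infty$. Worse, your own worry --- that ``a reflexive piece would map weakly-compactly into $\ell_\infty$'' --- is exactly what happens: take $p_n\equiv 2$ and $q_n=n$, so that $\ell_{p_n}=\ell_2$, $\ell_{q_n}=\ell_\infty$ by Proposition \ref{ele.infinito}, the inclusion holds, and it is weakly compact because every bounded operator with reflexive domain is weakly compact; yet $\varlimsup_n q_n=\infty$. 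So this implication cannot be proved as stated. (Note that the paper's own proof of this case rests on the assertion that $\{e_{n_{k_j}}\}_j$ is not relatively weakly compact in $\ell_\infty$; in fact $(e_n)$ is weakly null in $\ell_\infty$, since every $\phi\in\ell_\infty^*=ba(\mathbb{N})$ satisfies $\sum_n|\phi(e_n)|\le\|\phi\|$, so that set \emph{is} relatively weakly compact.) A correct formulation has to treat separately the band where $q_n$ exceeds a bound for $p_n$, on which the inclusion factors through a reflexive $\ell_M$-space as in the proof of Theorem \ref{Thm.infinito1} and is therefore automatically weakly compact.
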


\begin{proof}
Indeed, if $\varlimsup_{n} q_n<\infty$, then $\ell_{q_n}$ is separable. Hence, Cor.3.7 in \cite{hernandez_remarks_2024} states that a subset $S\subset \ell_{q_n}$ is relatively weakly compact if and only if
$$
\lim_{\lambda\rightarrow 0}\sup_{(x_n)\in S} \frac{1}{\lambda} \sum_{n=1}^{\infty} \vert \lambda x_n\vert^{q_n}=0.
$$
So applying this to $S=B_{\ell_{p_n}}$ the unit ball of $\ell_{p_n}$ in $\ell_{q_n}$ and denoting $q_n^-:=\varliminf_{n} q_n>1$, we get
$$
\lim_{\lambda\rightarrow 0}\sup_{(x_n)\in B_{\ell_{p_n}}} \frac{1}{\lambda} \sum_{n=1}^{\infty} \vert \lambda x_n\vert^{q_n}
\leq\lim_{\lambda\rightarrow 0} \sup_{(x_n)\in B_{\ell_{p_n}}} \lambda^{q_n^--1} \sum_{n=1}^{\infty} \vert x_n\vert^{q_n}
\leq\lim_{\lambda\rightarrow 0} \lambda^{q_n^--1} \cdot C
=0
$$
since $B_{\ell_{p_n}}$ is bounded in $\ell_{q_n}$ by hypothesis.

On the other hand, if there exists a subsequence $(q_{n_k})$ with $q_{n_k}\rightarrow 1$ (respectively $q_{n_k}\rightarrow \infty$), for a further subsequence we have $q_{n_{k_j}}\leq 1+\frac{1}{j}$ (resp. $q_{n_{k_j}}\geq j$), then $\ell_{q_{n_{k_j}}}\simeq \ell_1$ by Proposition \ref{Nakano-igualdad} (resp. $\ell_{q_{n_{k_j}}}\simeq \ell_\infty$ by Proposition \ref{ele.infinito}) and, for every bounded inclusion $\ell_{p_n}\hookrightarrow \ell_{q_n}$, the bounded set $\{e_{n_{k_j}}\}_j$ in $\ell_{p_n}$ is not (relatively) weakly compact in $\ell_{q_{n_{k_j}}}=\ell_1$ by Schur's property (resp. is not weakly compact in $\ell_{q_{n_{k_j}}}=\ell_\infty$) and hence the inclusion $i$ is not weakly compact.
\end{proof}

This contrasts with the Nakano function spaces $\lpv$, where the behabior of these properties is much richer and varied (see \cite{hernandez_disjointly_2025, hernandez_remarks_2024}). 

\begin{center}
    ACKNOWLEDGEMENTS:

    I want to special thank Francisco L. Hernández and César Ruiz for their mentorship and help during and after my PhD, including the contents and presentation of this article.
\end{center}

\printbibliography

@article{hernandez_disjointly_2025,
    title = {Disjointly strictly singular and {L}-weakly compact inclusions between variable {Lebesgue} spaces},
    volume = {550},
    issn = {0022-247X},
    url = {https://www.sciencedirect.com/science/article/pii/S0022247X25004159},
    doi = {10.1016/j.jmaa.2025.129634},
    abstract = {Disjointly strictly singular inclusions between variable Lebesgue spaces Lp(⋅)(μ) on finite measure are characterized. Suitable criteria in terms of the (bounded or unbounded) exponents are given. It is proved the equivalence of L-weak compactness (also called almost compactness) and disjoint strict singularity for variable Lebesgue space inclusions. For infinite measure any inclusion Lp(⋅)(μ)↪Lq(⋅)(μ) is not disjointly strictly singular. No restrictions on the exponent are imposed.},
    number = {2},
    urldate = {2025-05-13},
    journal = {Journal of Mathematical Analysis and Applications},
    author = {Hernández, Francisco L. and Ruiz, César and Sanchiz, Mauro},
    year = {2025},
    keywords = {Disjoint strict singularity, Inclusion operators, Variable Lebesgue spaces, Weak compactness},
    pages = {129634},
}

@book{diening_lebesgue_2011,
    series = {Lecture {Notes} in {Mathematics}},
    title = {Lebesgue and {Sobolev} {Spaces} with {Variable} {Exponents}},
    volume = {2017},
    publisher = {Springer Berlin Heidelberg},
    author = {Diening, Lars and Harjulehto, Petteri and Hästö, Peter and Ruzicka, Michael},
    year = {2011},
    doi = {10.1007/978-3-642-18363-8},
}

@article{hernandez-hernandez_disjointly_1990,
    title = {Disjointly strictly-singular operators in {Banach} lattices},
    volume = {031},
    number = {2},
    urldate = {2025-06-30},
    journal = {Acta Universitatis Carolinae. Mathematica et Physica},
    author = {Hernández, Francisco L.},
    year = {1990},
    pages = {35--40},
}

@book{aliprantis_positive_2006,
    title = {Positive {Operators}},
    url = {http://link.springer.com/10.1007/978-1-4020-5008-4},
    urldate = {2025-07-01},
    publisher = {Springer Netherlands},
    author = {Aliprantis, Charalambos D. and Burkinshaw, Owen},
    year = {2006},
    doi = {10.1007/978-1-4020-5008-4},
}

@book{bennett_interpolation_1988,
    series = {Pure and applied mathematics},
    title = {Interpolation of operators},
    number = {129},
    publisher = {Academic press},
    author = {Bennett, Colin and Sharpley, Robert C.},
    year = {1988},
}

@article{nakano_modulared_1951,
    title = {Modulared sequence spaces},
    volume = {27},
    doi = {10.3792/pja/1195571225},
    abstract = {Proceedings of the Japan Academy, Series A, Mathematical Sciences},
    number = {9},
    journal = {Proceedings of the Japan Academy},
    author = {Nakano, Hidegorô},
    year = {1951},
    pages = {508--512},
}

@article{hernandez_remarks_2024,
    title = {Remarks on weak compactness criteria in variable exponent {Lebesgue} spaces},
    volume = {47},
    doi = {10.2989/16073606.2023.2287839},
    number = {sup1},
    urldate = {2024-07-15},
    journal = {Quaestiones Mathematicae},
    author = {Hernández, Francisco L. and Ruiz, César and Sanchiz, Mauro},
    year = {2024},
    pages = {195--211},
}

@article{peirats_lp-copies_1992,
    title = {On lp-copies in {Musielak}-{Orlicz} sequence spaces},
    volume = {58},
    issn = {1420-8938},
    url = {https://doi.org/10.1007/BF01191882},
    doi = {10.1007/BF01191882},
    number = {2},
    urldate = {2025-07-02},
    journal = {Archiv der Mathematik},
    author = {Peirats, Vicente and Ruiz, César},
    year = {1992},
    keywords = {Sequence Space},
    pages = {164--173},
}

@article{kato_perturbation_1958,
    title = {Perturbation theory for nullity, deficiency and other quantities of linear operators},
    volume = {6},
    issn = {1565-8538},
    url = {https://doi.org/10.1007/BF02790238},
    doi = {10.1007/BF02790238},
    number = {1},
    urldate = {2025-07-03},
    journal = {Journal d’Analyse Mathématique},
    author = {Kato, Tosio},
    year = {1958},
    keywords = {Banach Space, Hilbert Space, Linear Operator, Perturbation Theory, Singular Point},
    pages = {261--322},
}

@book{meyer-nieberg_banach_1991,
    address = {Berlin, Heidelberg},
    series = {Universitext},
    title = {Banach {Lattices}},
    copyright = {http://www.springer.com/tdm},
    isbn = {978-3-540-54201-8 978-3-642-76724-1},
    url = {http://link.springer.com/10.1007/978-3-642-76724-1},
    urldate = {2025-07-02},
    publisher = {Springer},
    author = {Meyer-Nieberg, Peter},
    year = {1991},
    doi = {10.1007/978-3-642-76724-1},
    keywords = {Banach lattices, Banachscher Verband, Positive Operatoren, Positive operators, Positiver Operator, Riesz spaces, Riezscher Raum, Spectral theory, Spektral, approximation property},
}

@article{woo_modular_1973,
    title = {On modular sequence spaces},
    volume = {48},
    doi = {10.4064/sm-48-3-271-289},
    urldate = {2025-07-02},
    journal = {Studia Mathematica},
    author = {Woo, Joseph},
    year = {1973},
    pages = {271--289},
}

@book{lindenstrauss_classical_1996,
    address = {Berlin, Heidelberg},
    series = {Classics in {Mathematics}},
    title = {Classical {Banach} {Spaces} {I} and {II}: {Sequence} {Spaces} and {Function} {Spaces}},
    copyright = {https://www.springer.com/tdm},
    isbn = {978-3-540-60628-4 978-3-662-53294-2},
    shorttitle = {Classical {Banach} {Spaces} {I} and {II}},
    url = {https://link.springer.com/10.1007/978-3-662-53294-2},
    urldate = {2025-03-03},
    publisher = {Springer},
    author = {Lindenstrauss, Joram and Tzafriri, Lior},
    year = {1996},
    doi = {10.1007/978-3-662-53294-2},
    keywords = {46-02, 46A40, 46A45, 46BXX, 46JXX, Banach, Orlicz sequence spaces, biorthogonal systems, spaces, subspaces, symmetric bases},
}

@article{astashkin_strictly_2009,
    title = {Strictly singular inclusions of rearrangement invariant spaces and {Rademacher} spaces},
    volume = {193},
    issn = {0039-3223},
    url = {https://eudml.org/doc/285103},
    number = {3},
    urldate = {2025-07-14},
    journal = {Studia Mathematica},
    author = {Astashkin, Sergei V. and Hernández, Francisco L. and Semenov, Evgeni M.},
    year = {2009},
    pages = {269--283},
}

@article{hernandez_strictly_2003,
    title = {Strictly {Singular} {Embeddings} {Between} {Rearrangement} {Invariant} {Spaces}},
    volume = {7},
    doi = {10.1023/A:1025828302994},
    number = {1},
    urldate = {2025-07-14},
    journal = {Positivity},
    author = {Hernandez, F.L. and Novikov, S.Ya. and Semenov, E.M.},
    year = {2003},
    keywords = {Fourier Analysis, Invariant Space, Operator Theory, Potential Theory, Rearrangement Invariant Space},
    pages = {119--124},
}

@article{hernandez_strict_2003,
    title = {Strict singularity and co-singularity for symmetric sequence spaces},
    volume = {51},
    number = {2},
    journal = {Atti Sem. Mat. Fis. Univ. Modena},
    author = {Hernández, Francisco L. and Sánchez, Víctor Manuel and Semenov, Evgeny M.},
    year = {2003},
    pages = {397--406},
}

@article{kalton_orlicz_1977,
    title = {Orlicz sequence spaces without local convexity},
    volume = {81},
    copyright = {https://www.cambridge.org/core/terms},
    issn = {0305-0041, 1469-8064},
    url = {https://www.cambridge.org/core/product/identifier/S0305004100053342/type/journal_article},
    doi = {10.1017/s0305004100053342},
    number = {2},
    urldate = {2025-07-23},
    journal = {Mathematical Proceedings of the Cambridge Philosophical Society},
    author = {Kalton, N. J.},
    month = mar,
    year = {1977},
    note = {Publisher: Cambridge University Press (CUP)},
    pages = {253--277},
}

@article{lang_embeddings_2025,
    title = {Embeddings between sequence variable {Lebesgue} spaces, strict and finitely strict singularity},
    volume = {298},
    issn = {1522-2616},
    doi = {10.1002/mana.12031},
    number = {6},
    urldate = {2025-09-04},
    journal = {Mathematische Nachrichten},
    author = {Lang, Jan and Nekvinda, Aleš},
    year = {2025},
    pages = {2926-2941},
}

@phdthesis{sanchiz_tesis_2023,
    title = {Structure and operators on variable Lebesgue spaces},
    author = {Sanchiz, Mauro},
    month = feb,
    year = {2023},
    doi = {https://ucm.on.worldcat.org/oclc/1468950906},
    url = {https://ucm.on.worldcat.org/oclc/1468950906},
}

@misc{lang_embeddings_2023,
    title = {Embeddings between sequence variable {Lebesgue} spaces, strict and finitely strict singularity},
    doi = {10.48550/arXiv.2304.12416},
    urldate = {2025-12-18},
    publisher = {arXiv},
    author = {Lang, Jan and Nekvinda, Ales},
    month = apr,
    year = {2023},
    note = {arXiv:2304.12416},
}

\end{document}